\title{On signature-based expressions of system reliability}
\author{Jean-Luc Marichal}
\address{Mathematics Research Unit, FSTC, University of Luxembourg, 6, rue Coudenhove-Kalergi, L-1359 Luxembourg, Luxembourg}
\email{jean-luc.marichal[at]uni.lu }
\author{Pierre Mathonet}
\address{Mathematics Research Unit, FSTC, University of Luxembourg, 6, rue Coudenhove-Kalergi, L-1359 Luxembourg, Luxembourg}
\email{pierre.mathonet[at]uni.lu, p.mathonet[at]ulg.ac.be}
\author{Tam\'as Waldhauser}
\address{Mathematics Research Unit, FSTC, University of Luxembourg, 6, rue Coudenhove-Kalergi, L-1359 Luxembourg, Luxembourg and Bolyai Institute, University of Szeged, Aradi v\'ertan\'uk tere 1, H-6720 Szeged, Hungary}
\email{twaldha[at]math.u-szeged.hu }
\date{February 17, 2011}
\begin{document}

\theoremstyle{plain}
\newtheorem{theorem}{Theorem}
\newtheorem{lemma}[theorem]{Lemma}
\newtheorem{proposition}[theorem]{Proposition}
\newtheorem{corollary}[theorem]{Corollary}
\newtheorem{fact}[theorem]{Fact}
\newtheorem*{main}{Main Theorem}

\theoremstyle{definition}
\newtheorem{definition}[theorem]{Definition}
\newtheorem{example}[theorem]{Example}

\theoremstyle{remark}
\newtheorem*{conjecture}{onjecture}
\newtheorem{remark}{Remark}
\newtheorem{claim}{Claim}

\newcommand{\N}{\mathbb{N}}
\newcommand{\R}{\mathbb{R}}
\newcommand{\Q}{\mathbb{Q}}
\newcommand{\Vspace}{\vspace{2ex}}
\newcommand{\bfx}{\mathbf{x}}

\begin{abstract}
The concept of signature was introduced by Samaniego for systems whose components have i.i.d.\ lifetimes. This concept proved to be
useful in the analysis of theoretical behaviors of systems. In particular, it provides an interesting signature-based representation of
the system reliability in terms of reliabilities of $k$-out-of-$n$ systems. In the non-i.i.d.\ case, we show that, at any time, this representation still
holds true for every coherent system if and only if the component states are exchangeable. We also discuss conditions for
obtaining an alternative representation of the system reliability in which the signature is replaced by its non-i.i.d.\ extension. Finally, we
discuss conditions for the system reliability to have both representations.
\end{abstract}

\keywords{system signature, system reliability, coherent system, order statistic}

\subjclass[2010]{62G30, 62N05, 90B25, 94C10}

\maketitle

\section{Introduction}

Consider a system made up of $n$ $(n\geqslant 3)$ components and let $\phi\colon\{0,1\}^n\to\{0,1\}$ be its \emph{structure function}, which
expresses the state of the system in terms of the states of its components. Denote the set of components by $[n]=\{1,\ldots,n\}$. We assume that
the system is \emph{coherent}, which means that $\phi$ is nondecreasing in each variable and has only essential variables, i.e., for every $k\in
[n]$, there exists $\bfx=(x_1,\ldots,x_n)\in\{0,1\}^n$ such that $\phi(\bfx)|_{x_k=0}\neq\phi(\bfx)|_{x_k=1}$.

Let $X_1,\ldots,X_n$ denote the component lifetimes and let $X_{1:n},\ldots,X_{n:n}$ be the order statistics obtained by rearranging the
variables $X_1,\ldots,X_n$ in ascending order of magnitude; that is, $X_{1:n}\leqslant\cdots\leqslant X_{n:n}$. Denote also the system lifetime
by $T$ and the system reliability at time $t>0$ by $\overline{F}_S(t)=\Pr(T>t)$.

Assuming that the component lifetimes are independent and identically distributed (i.i.d.) according to an absolutely continuous joint c.d.f.\
$F$, one can show (see Samaniego \cite{Sam85}) that
\begin{equation}\label{eq:sdf76}
\overline{F}_S(t) = \sum_{k=1}^n\Pr(T=X_{k:n})\,\overline{F}_{k:n}(t)
\end{equation}
for every $t>0$, where $\overline{F}_{k:n}(t)=\Pr(X_{k:n}>t)$.

Under this i.i.d.\ assumption, Samaniego~\cite{Sam85} introduced the \emph{signature} of the system as the $n$-tuple
$\mathbf{s}=(s_1,\ldots,s_n)$, where
$$
s_k=\Pr(T=X_{k:n}), \qquad k\in [n],
$$
is the probability that the $k$th component failure causes the system to fail. It turned out that the signature is a feature of the
system design in the sense that it depends only on the structure function $\phi$ (and not on the c.d.f.\ $F$). Boland~\cite{Bol01} obtained the
explicit formula
$$
s_k=\phi_{n-k+1}-\phi_{n-k}
$$
where
\begin{equation}\label{eq:phikaa}
\phi_k=\frac{1}{{n\choose k}}\,\sum_{\textstyle{\bfx\in\{0,1\}^n\atop |\bfx|=k}}\phi(\bfx)
\end{equation}
and $|\bfx|=\sum_{i=1}^nx_i$. Thus, under the i.i.d.\ assumption, the system reliability can be calculated by the formula
\begin{equation}\label{eq:as897ds}
\overline{F}_S(t) = \sum_{k=1}^n\big(\phi_{n-k+1}-\phi_{n-k}\big)\,\overline{F}_{k:n}(t).
\end{equation}

Since formula (\ref{eq:as897ds}) provides a simple and useful way to compute the system reliability through the concept of signature, it is natural to relax the i.i.d.\ assumption (as Samaniego \cite[Section~8.3]{Sam07} rightly suggested) and search for necessary and sufficient conditions on the joint c.d.f.\ $F$ for formulas
(\ref{eq:sdf76}) and/or (\ref{eq:as897ds}) to still hold for every system design.

On this issue, Kochar et al.~\cite[p.~513]{KocMukSam99} mentioned that (\ref{eq:sdf76}) and (\ref{eq:as897ds}) still hold when the continuous
variables $X_1,\ldots,X_n$ are exchangeable (i.e., when $F$ is invariant under any permutation of indexes); see also \cite{NavRuiSan05,Zha10}
(and \cite[Lemma~1]{NavRyc07} for a detailed proof). It is also noteworthy that Navarro et al.~\cite[Thm.~3.6]{NavSamBalBha08} showed that
(\ref{eq:sdf76}) still holds when the joint c.d.f.\ $F$ has no ties (i.e., $\Pr(X_i=X_j)=0$ for every $i\neq j$) and the variables
$X_1,\ldots,X_n$ are ``weakly exchangeable'' (see Remark~\ref{remarkweak} below). As we will show, all these conditions are not necessary.

Let $\Phi_n$ denote the family of nondecreasing functions $\phi\colon\{0,1\}^n\to\{0,1\}$ whose variables are all essential. In this paper,
without any assumption on the joint c.d.f.\ $F$, we show that, for every $t>0$, the representation in (\ref{eq:as897ds}) of the system
reliability holds for every $\phi\in\Phi_n$ if and only if the variables $\chi_1(t),\ldots,\chi_n(t)$ are exchangeable, where
$$
\chi_k(t)=\mathrm{Ind}(X_k>t)
$$
denotes the \emph{random state} of the $k$th component at time $t$ (i.e., $\chi_k(t)$ is the indicator variable of the event ($X_k>t$)). This
result is stated in Theorem~\ref{thm:aasd78}.

Assuming that the joint c.d.f.\ $F$ has no ties, we also yield necessary and sufficient conditions on $F$ for formula (\ref{eq:sdf76}) to hold
for every $\phi\in\Phi_n$ (Theorem~\ref{thm:aasd78zz}). These conditions can be interpreted in terms of symmetry of certain conditional
probabilities.

We also show (Proposition~\ref{prop:aasd78zzz}) that the condition\footnote{Note that, according to the terminology used in \cite{NavSpiBal10}, the left-hand side of (\ref{eq:wer876}) is the $k$th coordinate of the \emph{probability signature}, while the right-hand side is the $k$th coordinate of the \emph{system signature}.}
\begin{equation}\label{eq:wer876}
\Pr(T=X_{k:n})=\phi_{n-k+1}-\phi_{n-k}\, ,\qquad k\in [n]
\end{equation}
holds for every $\phi\in\Phi_n$ if and only if
\begin{equation}\label{eq:sf86}
\Pr\Big(\max_{i\in [n]\setminus A}X_i<\min_{i\in A}X_i\Big) = \frac{1}{{n\choose |A|}}~,\qquad A\subseteq [n].
\end{equation}
Finally, we show that both (\ref{eq:sdf76}) and (\ref{eq:as897ds}) hold for every $t>0$ and every $\phi\in\Phi_n$ if and only if (\ref{eq:sf86})
holds and the variables $\chi_1(t),\ldots,\chi_n(t)$ are exchangeable for every $t>0$ (Theorem~\ref{thm:aasd78zzzz}).

Through the usual identification of the elements of $\{0,1\}^n$ with the subsets of $[n]$, a pseudo-Boolean function $f\colon\{0,1\}^n\to\R$ can
be described equivalently by a set function $v_f\colon 2^{[n]}\to\R$. We simply write $v_f(A)=f(\mathbf{1}_A)$, where $\mathbf{1}_A$ denotes the
$n$-tuple whose $i$th coordinate ($i\in [n]$) is $1$, if $i\in A$, and $0$, otherwise. To avoid cumbersome notation, we henceforth use the same
symbol to denote both a given pseudo-Boolean function and its underlying set function, thus writing $f\colon\{0,1\}^n\to\R$ or $f\colon
2^{[n]}\to\R$ interchangeably.

Recall that the $k$th order statistic function $\bfx\mapsto x_{k:n}$ of $n$ Boolean variables is defined by $x_{k:n}=1$, if
$|\mathbf{x}|\geqslant n-k+1$, and $0$, otherwise. As a matter of convenience, we also formally define $x_{0:n}\equiv 0$ and $x_{n+1:n}\equiv
1$.

\section{Signature-based decomposition of the system reliability}

In the present section, without any assumption on the joint c.d.f.\ $F$, we show that, for every $t>0$, (\ref{eq:as897ds}) holds true for every
$\phi\in\Phi_n$ if and only if the state variables $\chi_1(t),\ldots,\chi_n(t)$ are exchangeable.

The following result (see Dukhovny~\cite[Thm.~2]{Duk07}) gives a useful expression for the system reliability in terms of the underlying
structure function and the component states. We provide a shorter proof here. For every $t>0$, we set
$\boldsymbol{\chi}(t)=(\chi_1(t),\ldots,\chi_n(t))$.

\begin{proposition}\label{thm:thm1}
For every $t>0$, we have
\begin{equation}\label{eq:sf676}
\overline{F}_S(t)=\sum_{\mathbf{x}\in\{0,1\}^n}\phi(\mathbf{x})\,\Pr(\boldsymbol{\chi}(t)=\mathbf{x}).
\end{equation}
\end{proposition}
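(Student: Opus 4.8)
The plan is to reduce $\overline{F}_S(t)$ to a sum over the $2^n$ possible configurations of the component states at time $t$, using only the deterministic link between the system lifetime and the structure function. First I would make precise how $T$ is attached to $\phi$: the set of components still alive at time $t$ is $\{k\in[n]:X_k>t\}$, whose indicator vector is exactly $\boldsymbol{\chi}(t)=(\chi_1(t),\ldots,\chi_n(t))$, and, writing $T=\max_{P}\min_{i\in P}X_i$ with $P$ ranging over the path sets of $\phi$, one checks that $T>t$ holds if and only if the alive set contains a path set, i.e.\ if and only if $\phi(\boldsymbol{\chi}(t))=1$ by monotonicity of $\phi$. This yields the pathwise identity $\mathrm{Ind}(T>t)=\phi(\boldsymbol{\chi}(t))$ between $\{0,1\}$-valued random variables, valid for every $t>0$ with no hypothesis on $F$.

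Taking probabilities and partitioning according to the finitely many values of $\boldsymbol{\chi}(t)$,
\[
\overline{F}_S(t)=\Pr(T>t)=\Pr\bigl(\phi(\boldsymbol{\chi}(t))=1\bigr)=\sum_{\substack{\mathbf{x}\in\{0,1\}^n\\ \phi(\mathbf{x})=1}}\Pr(\boldsymbol{\chi}(t)=\mathbf{x}).
\]
Since $\phi$ is $\{0,1\}$-valued, adjoining the terms with $\phi(\mathbf{x})=0$ (which vanish) turns the restricted sum into $\sum_{\mathbf{x}\in\{0,1\}^n}\phi(\mathbf{x})\,\Pr(\boldsymbol{\chi}(t)=\mathbf{x})$, which is (\ref{eq:sf676}). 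Equivalently, the identity says that $\overline{F}_S(t)$ is the expectation of $\phi(\boldsymbol{\chi}(t))$.

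I do not anticipate a genuine obstacle: this is essentially a bookkeeping identity, and the shortening over Dukhovny's argument comes precisely from invoking the pathwise identity directly instead of going through a generating-function or inclusion--exclusion computation. The one step that deserves a line of justification is the identity $\mathrm{Ind}(T>t)=\phi(\boldsymbol{\chi}(t))$ itself, where coherence (monotonicity of $\phi$) is what guarantees that the system state at time $t$ depends on the lifetimes only through which components are alive at that instant. Once this is in hand, the proof is just the displayed two-line computation, and it visibly uses neither absolute continuity of $F$, nor absence of ties, nor any independence of the $X_k$.
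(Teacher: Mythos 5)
Your proof is correct and follows the same route as the paper: both start from the identity $\overline{F}_S(t)=\Pr(\phi(\boldsymbol{\chi}(t))=1)$, partition over the values of $\boldsymbol{\chi}(t)$, and extend the sum to all of $\{0,1\}^n$ using that $\phi$ is $\{0,1\}$-valued. The only difference is that you spell out the pathwise justification of $\mathrm{Ind}(T>t)=\phi(\boldsymbol{\chi}(t))$ via path sets, which the paper takes as given.
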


\begin{proof}
We simply have
$$
\overline{F}_S(t) = \Pr(\phi(\boldsymbol{\chi}(t))=1)=\sum_{\textstyle{\bfx\in\{0,1\}^n\atop \phi(\bfx)=1}}\Pr(\boldsymbol{\chi}(t)=\mathbf{x}),
$$
which immediately leads to (\ref{eq:sf676}).
\end{proof}

Applying (\ref{eq:sf676}) to the $k$-out-of-$n$ system $\phi(\bfx)=x_{k:n}$, we obtain
$$
\overline{F}_{k:n}(t)=\sum_{|\mathbf{x}|\geqslant n-k+1}\Pr(\boldsymbol{\chi}(t)=\mathbf{x})
$$
from which we immediately derive (see \cite[Prop.~13]{DukMar08})
\begin{equation}\label{eq:sf676xy}
\overline{F}_{n-k+1:n}(t)-\overline{F}_{n-k:n}(t)=\sum_{|\mathbf{x}|=k}\Pr(\boldsymbol{\chi}(t)=\mathbf{x}).
\end{equation}

The following proposition, a key result of this paper, provides necessary and sufficient conditions on $F$ for $\overline{F}_S(t)$ to be a
certain weighted sum of the $\overline{F}_{k:n}(t)$, $k\in [n]$. We first consider a lemma.

\begin{lemma}\label{lemma:sd876}
Let $\lambda\colon\{0,1\}^n\to\R$ be a given function. We have
\begin{equation}\label{eq:d7df}
\sum_{\mathbf{x}\in\{0,1\}^n}\lambda(\bfx)\,\phi(\mathbf{x})=0\qquad \mbox{for every}~\phi\in\Phi_n
\end{equation}
if and only if $\lambda(\bfx)=0$ for all $\bfx\neq\mathbf{0}$.
\end{lemma}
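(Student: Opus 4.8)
The plan is to prove the "if" direction by a trivial observation and concentrate on the "only if" direction, which is the substantive part. For the easy direction: if $\lambda(\bfx)=0$ for all $\bfx\neq\mathbf{0}$, then $\sum_{\bfx}\lambda(\bfx)\phi(\bfx)=\lambda(\mathbf{0})\phi(\mathbf{0})$, and since every $\phi\in\Phi_n$ is nondecreasing with essential variables (hence not constant) we have $\phi(\mathbf{0})=0$, so the sum vanishes. Note this is exactly where coherence is used: it forces $\phi(\mathbf{0})=0$ and $\phi(\mathbf{1})=1$.

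For the "only if" direction, suppose \eqref{eq:d7df} holds for every $\phi\in\Phi_n$. The natural strategy is induction on $|\bfx|$, showing $\lambda(\bfx)=0$ for each nonzero $\bfx$, starting from the top. First I would plug in $\phi(\bfx)=x_{n:n}$ (the OR / series-parallel extreme), which equals $1$ precisely on $\bfx\neq\mathbf{0}$; this gives $\sum_{\bfx\neq\mathbf{0}}\lambda(\bfx)=0$. More usefully, plugging in the $k$-out-of-$n$ functions $\phi(\bfx)=x_{k:n}$ for $k=1,\ldots,n$ yields $\sum_{|\bfx|\geqslant n-k+1}\lambda(\bfx)=0$, and taking differences gives $\sum_{|\bfx|=j}\lambda(\bfx)=0$ for every $j\in[n]$. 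That handles the symmetric part but not individual values. To separate individual $\bfx$'s, the key device is to use, for a fixed nonempty $A\subseteq[n]$, a structure function that distinguishes $\mathbf{1}_A$ from the other sets of the same size — for instance the coherent function whose min-path-sets are suitably chosen. Concretely, I would proceed downward: assuming $\lambda(\bfy)=0$ for all $\bfy$ with $|\bfy|>|A|$, I want a $\phi\in\Phi_n$ with $\phi(\mathbf{1}_A)=1$ but $\phi(\mathbf{1}_B)=0$ for every $B$ with $|B|=|A|$ and $B\neq A$; then \eqref{eq:d7df} collapses to $\lambda(\mathbf{1}_A)=0$. A candidate is $\phi=\bigvee$ over the singletons outside $A$ together with the term $\bigwedge_{i\in A}x_i$; i.e., $\phi(\bfx)=\bigl(\bigwedge_{i\in A}x_i\bigr)\vee\bigl(\bigvee_{i\notin A}x_i\bigr)$. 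This is nondecreasing, all variables are essential (each $i\notin A$ through its singleton term, each $i\in A$ through $\mathbf{1}_A$ itself when $|A|\geqslant 2$, and when $|A|=1$ it is just $x_{n:n}$), and one checks $\phi(\mathbf{1}_B)=1$ iff $A\subseteq B$ or $B\not\subseteq A$, so among size-$|A|$ sets only $B=A$ gives $1$, while among larger sets $B$ we may get $1$ but those $\lambda(\mathbf{1}_B)$ are already $0$ by the induction hypothesis. Expanding \eqref{eq:d7df} for this $\phi$ and using the induction hypothesis leaves precisely $\lambda(\mathbf{1}_A)=0$.

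The main obstacle is verifying that the chosen family of structure functions genuinely lies in $\Phi_n$ — the essentiality of every variable is the delicate point, especially in boundary cases such as $|A|=1$ or $|A|=n$, and one must make sure the "distinguishing" function really does separate $A$ from all other sets of equal cardinality while only picking up already-known-to-vanish terms above. Once the right family is pinned down, the induction itself is routine bookkeeping. An alternative, cleaner route avoiding case analysis is to observe that $\Phi_n$ spans (over $\R$, via $\R$-linear combinations) the whole space of pseudo-Boolean functions vanishing at $\mathbf{0}$: indeed every such function is an $\R$-linear combination of the functions $\bfx\mapsto\bigwedge_{i\in B}x_i=\prod_{i\in B}x_i$ for $\emptyset\neq B\subseteq[n]$ (the conjunction of a min-path-set), and each such conjunction, while not itself coherent for $B\neq[n]$, can be written as a difference of two members of $\Phi_n$; hence \eqref{eq:d7df} holding on all of $\Phi_n$ forces $\sum_{\bfx}\lambda(\bfx)f(\bfx)=0$ for all $f$ with $f(\mathbf{0})=0$, and choosing $f$ to be the indicator of a single nonzero $\bfx$ (again a difference of coherent functions) gives $\lambda(\bfx)=0$. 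I would present whichever of these two is shorter once the details are checked; the inductive version is more self-contained and is what I would write out in full.
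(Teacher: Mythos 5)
Your ``if'' direction and your preliminary step (using the $k$-out-of-$n$ functions $x_{k:n}$ to get $\sum_{|\bfx|=j}\lambda(\bfx)=0$ for each $j$) are fine, but the key step of your induction fails. You propose $\phi(\bfx)=\bigl(\bigwedge_{i\in A}x_i\bigr)\vee\bigl(\bigvee_{i\notin A}x_i\bigr)$ and correctly record that $\phi(\mathbf{1}_B)=1$ iff $A\subseteq B$ or $B\not\subseteq A$; but this contradicts your conclusion that ``among size-$|A|$ sets only $B=A$ gives $1$''. Any $B$ with $|B|=|A|$ and $B\neq A$ satisfies $B\not\subseteq A$, so $\phi(\mathbf{1}_B)=1$ for \emph{every} $B$ of size $|A|$ (and for many smaller $B$ as well). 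After applying the induction hypothesis and the level-sum relations, the equation collapses to $\sum_{|B|<|A|,\,B\not\subseteq A}\lambda(\mathbf{1}_B)=0$, not to $\lambda(\mathbf{1}_A)=0$. Your fallback route has a false claim too: a monomial $\prod_{i\in B}x_i$ with $B\neq[n]$, or the indicator of a single nonzero point $\bfx\neq\mathbf{1}$, cannot be written as a difference $\phi_1-\phi_2$ of two members of $\Phi_n$, since every $\phi\in\Phi_n$ satisfies $\phi(\mathbf{1})=1$ (forcing $\phi_1(\mathbf{1})-\phi_2(\mathbf{1})=0$, which already rules out the monomials), and the essentiality requirement rules out $\prod_{i\in A}x_i$ as the upper function in the point-indicator case. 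The underlying spanning statement is true, but it needs a proof.

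The obstacle you correctly identify --- essentiality of all variables --- is exactly what the paper works around. Its proof is a rank argument: it exhibits $2^n-1$ linearly independent members of $\Phi_n$, namely $\phi_{[n]}=\prod_{i\in[n]}x_i$ and, for $\varnothing\neq A\subsetneq[n]$, $\phi_A=\bigl(\prod_{i\in A}x_i\bigr)\amalg\bigl(\prod_{i\in A^*}x_i\bigr)$ where $A^*$ is an $(n-1)$-set with $A\cup A^*=[n]$ (chosen via a permutation with odd-length cycles when $|A|=n-1$, so that the resulting coupled relations $c_{[n]\setminus\{k\}}+c_{[n]\setminus\{\pi^{-1}(k)\}}=0$ force all coefficients to vanish). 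Since all $\phi\in\Phi_n$ vanish at $\mathbf{0}$, the solution space of your linear system is then exactly the span of the point mass at $\mathbf{0}$. If you want to keep your inductive framing, replace your candidate by this $\phi_A$: going downward from $|A|=n$, the extra terms it picks up are $\lambda(\mathbf{1}_B)$ for $B\supseteq A^*$, all of size at least $n-1>|A|$ when $|A|\leqslant n-2$ and hence already known to vanish; the level $|A|=n-1$ must then be handled separately by the odd-cycle argument.
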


\begin{proof}
Condition (\ref{eq:d7df}) defines a system of linear equations with the $2^n$ unknowns $\lambda(\bfx)$, $\bfx\in \{0,1\}^n$. We observe that
there exist $2^n-1$ functions $\phi_A\in\Phi_n$, $A\not=\varnothing$, which are linearly independent when considered as real functions (for
details, see Appendix~\ref{app:lemma}). It follows that the vectors of their values are also linearly independent. Therefore the equations in
(\ref{eq:d7df}) corresponding to the functions $\phi_A$, $A\not=\varnothing$, are linearly independent and hence the system has a rank at least
$2^n-1$. This shows that its solutions are multiples of the immediate solution $\lambda_0$ defined by $\lambda_0(\bfx)=0$, if
$\bfx\not=\mathbf{0}$, and $\lambda_0(\mathbf{0})=1$.
%
\end{proof}

Let $w\colon\{0,1\}^n\to\R$ be a given function. For every $k\in [n]$ and every $\phi\in\Phi_n$, define
\begin{equation}\label{eq:sdf65}
\phi_k^w=\sum_{|\bfx|=k}w(\bfx)\, \phi(\bfx).
\end{equation}

\begin{proposition}\label{lemma:aasd78y}
For every $t>0$, we have
$$
\overline{F}_S(t)=\sum_{k=1}^n \big(\phi^w_{n-k+1}-\phi^w_{n-k}\big)\,\overline{F}_{k:n}(t)\qquad \mbox{for every}~\phi\in\Phi_n
$$
if and only if
\begin{equation}\label{eq:sad75fdf}
\Pr(\boldsymbol{\chi}(t)=\mathbf{x}) ~=~ w(\bfx)\,\sum_{|\mathbf{z}|=|\bfx|}\Pr(\boldsymbol{\chi}(t)=\mathbf{z})\qquad \mbox{for
every}~\bfx\neq\mathbf{0}.
\end{equation}
\end{proposition}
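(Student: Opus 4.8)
The plan is to reduce the ``if and only if'' to Lemma~\ref{lemma:sd876} by rewriting the target identity as a single equation of the form $\sum_{\bfx}\lambda(\bfx)\,\phi(\bfx)=0$ holding for all $\phi\in\Phi_n$, where $\lambda$ depends on $t$, on $w$, and on the distribution of $\boldsymbol{\chi}(t)$. First I would substitute Proposition~\ref{thm:thm1} for $\overline{F}_S(t)$ on the left-hand side, and use the identity in~(\ref{eq:sf676xy}) together with the definition~(\ref{eq:sdf65}) of $\phi^w_k$ to expand the right-hand side. Concretely, $\phi^w_{n-k+1}-\phi^w_{n-k}$ is a linear combination of the values $\phi(\bfx)$ over $\bfx$ with $|\bfx|\in\{n-k,n-k+1\}$, and $\overline{F}_{k:n}(t)$ by Proposition~\ref{thm:thm1} applied to $x_{k:n}$ is $\sum_{|\mathbf{z}|\geqslant n-k+1}\Pr(\boldsymbol{\chi}(t)=\mathbf{z})$. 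After collecting, the whole ``system reliability $=$ weighted sum'' statement becomes $\sum_{\bfx}\lambda(\bfx)\,\phi(\bfx)=0$ for every $\phi\in\Phi_n$, with an explicit $\lambda$.

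The cleaner route, which I would actually follow, is to first observe that $\sum_{k=1}^n\overline{F}_{k:n}(t)\,\phi^w_{\cdots}$ telescopes: writing $\overline{F}_{k:n}(t)=\sum_{|\mathbf{z}|\geqslant n-k+1}\Pr(\boldsymbol{\chi}(t)=\mathbf{z})$ and summing by parts, one gets
$$
\sum_{k=1}^n \big(\phi^w_{n-k+1}-\phi^w_{n-k}\big)\,\overline{F}_{k:n}(t)
=\sum_{j=1}^{n}\phi^w_{j}\,\big(\overline{F}_{n-j+1:n}(t)-\overline{F}_{n-j:n}(t)\big)
=\sum_{j=1}^{n}\phi^w_{j}\sum_{|\mathbf{z}|=j}\Pr(\boldsymbol{\chi}(t)=\mathbf{z}),
$$
using $\phi^w_0=0$ and $\overline{F}_{n+1:n}(t)=1$, $\overline{F}_{0:n}(t)=0$. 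Plugging in~(\ref{eq:sdf65}) and combining with Proposition~\ref{thm:thm1}, the desired equality for a fixed $\phi$ reads
$$
\sum_{\bfx\neq\mathbf{0}}\Pr(\boldsymbol{\chi}(t)=\bfx)\,\phi(\bfx)
=\sum_{\bfx\neq\mathbf{0}}w(\bfx)\,\phi(\bfx)\sum_{|\mathbf{z}|=|\bfx|}\Pr(\boldsymbol{\chi}(t)=\mathbf{z}),
$$
i.e.\ $\sum_{\bfx}\lambda(\bfx)\,\phi(\bfx)=0$ with $\lambda(\mathbf{0})$ arbitrary and, for $\bfx\neq\mathbf{0}$,
$$
\lambda(\bfx)=\Pr(\boldsymbol{\chi}(t)=\bfx)-w(\bfx)\sum_{|\mathbf{z}|=|\bfx|}\Pr(\boldsymbol{\chi}(t)=\mathbf{z}).
$$

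Then I would invoke Lemma~\ref{lemma:sd876}: the equality holds for every $\phi\in\Phi_n$ if and only if $\lambda(\bfx)=0$ for all $\bfx\neq\mathbf{0}$, which is precisely condition~(\ref{eq:sad75fdf}). (The value $\lambda(\mathbf{0})$ plays no role since $\phi(\mathbf{0})=0$ for every $\phi\in\Phi_n$, so it does not affect the sum; this matches the fact that Lemma~\ref{lemma:sd876} leaves $\lambda(\mathbf{0})$ unconstrained.) I expect the main obstacle to be purely bookkeeping: getting the telescoping/summation-by-parts indices right, correctly handling the boundary conventions $x_{0:n}\equiv0$, $x_{n+1:n}\equiv1$, $\phi^w_0=0$, and making sure the $|\bfx|=k$ strata are matched consistently on both sides so that the coefficient of each $\phi(\bfx)$ is exactly the claimed $\lambda(\bfx)$. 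Once the algebra is arranged so that the statement is literally ``$\sum_\bfx\lambda(\bfx)\phi(\bfx)=0$ for all $\phi\in\Phi_n$,'' the conclusion is immediate from the lemma.
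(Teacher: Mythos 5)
Your proposal is correct and follows essentially the same route as the paper: the same summation-by-parts identity, the same use of Proposition~\ref{thm:thm1} and of~(\ref{eq:sf676xy}) to turn both sides into linear combinations of the values $\phi(\bfx)$, and the same final appeal to Lemma~\ref{lemma:sd876} applied to $\lambda(\bfx)=\Pr(\boldsymbol{\chi}(t)=\bfx)-w(\bfx)\sum_{|\mathbf{z}|=|\bfx|}\Pr(\boldsymbol{\chi}(t)=\mathbf{z})$. The bookkeeping you flag (boundary conventions $\phi^w_0=0$, $\overline{F}_{0:n}(t)=0$) is exactly what the paper handles via its elementary algebraic identity with $a_0=b_0=0$, so there is no gap.
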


\begin{proof}
First observe that we have
\begin{equation}\label{eq:dfrt778}
\sum_{k=1}^n
\big(\phi^w_{n-k+1}-\phi^w_{n-k}\big)\,\overline{F}_{k:n}(t)=\sum_{k=1}^n\phi^w_k\,\big(\overline{F}_{n-k+1:n}(t)-\overline{F}_{n-k:n}(t)\big).
\end{equation}
This immediately follows from the elementary algebraic identity
$$
\sum_{k=1}^na_k\, (b_{n-k+1}-b_{n-k})=\sum_{k=1}^n b_k\, (a_{n-k+1}-a_{n-k})
$$
which holds for all real tuples $(a_0,a_1,\ldots,a_n)$ and $(b_0,b_1,\ldots,b_n)$ such that $a_0=b_0=0$. Combining (\ref{eq:sf676xy}) with
(\ref{eq:sdf65}) and (\ref{eq:dfrt778}), we then obtain
\begin{eqnarray*}
\sum_{k=1}^n \big(\phi^w_{n-k+1}-\phi^w_{n-k}\big)\,\overline{F}_{k:n}(t)
&=& \sum_{k=1}^n\sum_{|\mathbf{x}|=k}w(\bfx)\,\phi(\mathbf{x})\,\sum_{|\mathbf{z}|=k}\Pr(\boldsymbol{\chi}(t)=\mathbf{z})\\
&=& \sum_{\mathbf{x}\in\{0,1\}^n} w(\bfx)\,\phi(\mathbf{x})\,\sum_{|\mathbf{z}|=|\bfx|}\Pr(\boldsymbol{\chi}(t)=\mathbf{z}).
\end{eqnarray*}
The result then follows from Proposition~\ref{thm:thm1} and Lemma~\ref{lemma:sd876}.
\end{proof}

\begin{remark}
We observe that the existence of a c.d.f.\ $F$ satisfying (\ref{eq:sad75fdf}) with $\Pr(\boldsymbol{\chi}(t)=\mathbf{x})>0$ for some $\bfx\neq\mathbf{0}$ is only possible when $\sum_{|\mathbf{z}|=|\mathbf{x}|}w(\mathbf{z})=1$. In this paper we will actually make use of (\ref{eq:sad75fdf}) only when this condition holds (see (\ref{eq:ds67dd}) and (\ref{eq:yx6czz})).
\end{remark}

We now apply Proposition~\ref{lemma:aasd78y} to obtain necessary and sufficient conditions on $F$ for (\ref{eq:as897ds}) to hold for every
$\phi\in\Phi_n$.

\begin{theorem}\label{thm:aasd78}
For every $t>0$, the representation (\ref{eq:as897ds}) holds for every $\phi\in\Phi_n$ if and only if the indicator variables
$\chi_1(t),\ldots,\chi_n(t)$ are exchangeable.
\end{theorem}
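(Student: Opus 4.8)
The plan is to apply Proposition~\ref{lemma:aasd78y} with the particular weight function $w(\bfx)=1/\binom{n}{|\bfx|}$, since with this choice $\phi^w_k$ coincides with $\phi_k$ from (\ref{eq:phikaa}), so that the decomposition in Proposition~\ref{lemma:aasd78y} becomes precisely (\ref{eq:as897ds}). Thus (\ref{eq:as897ds}) holds for every $\phi\in\Phi_n$ if and only if condition (\ref{eq:sad75fdf}) holds with this $w$, i.e.
$$
\Pr(\boldsymbol{\chi}(t)=\mathbf{x}) ~=~ \frac{1}{\binom{n}{|\bfx|}}\,\sum_{|\mathbf{z}|=|\bfx|}\Pr(\boldsymbol{\chi}(t)=\mathbf{z})\qquad \mbox{for every}~\bfx\neq\mathbf{0}.
$$
So the theorem reduces to showing that this family of identities (for all nonzero $\bfx$) is equivalent to exchangeability of $\chi_1(t),\dots,\chi_n(t)$.

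For the forward-looking direction, suppose the displayed condition holds. Exchangeability of the $\chi_i(t)$ means that $\Pr(\boldsymbol{\chi}(t)=\mathbf{x})$ depends only on $|\bfx|$; equivalently, for any two tuples $\mathbf{x},\mathbf{y}$ with $|\mathbf{x}|=|\mathbf{y}|$ we have equal probabilities. For nonzero tuples of equal weight $k$, the condition says each such probability equals the common value $\frac{1}{\binom nk}\sum_{|\mathbf{z}|=k}\Pr(\boldsymbol{\chi}(t)=\mathbf{z})$, so they are all equal; this handles all $\mathbf{x}\ne\mathbf{0}$. The only remaining case is $\mathbf{x}=\mathbf{0}$, but there is only one tuple of weight $0$, so there is nothing to check. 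Hence exchangeability follows.

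Conversely, if the $\chi_i(t)$ are exchangeable, then $\Pr(\boldsymbol{\chi}(t)=\mathbf{x})$ depends only on $|\bfx|$, so for $|\bfx|=k$ the right-hand side of the displayed identity is $\frac{1}{\binom nk}\cdot\binom nk\,\Pr(\boldsymbol{\chi}(t)=\mathbf{x})=\Pr(\boldsymbol{\chi}(t)=\mathbf{x})$, giving the identity. I expect the proof to be short, essentially just substituting the right $w$ and unwinding the definition of exchangeability; the only point requiring a moment of care is the treatment of the weight-$0$ case (excluded from (\ref{eq:sad75fdf}) but trivial since $\mathbf{0}$ is the unique weight-$0$ tuple) and noting that the normalization $\sum_{|\mathbf{z}|=k}w(\mathbf{z})=1$ mentioned in the Remark indeed holds here, so there is no hidden inconsistency. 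There is no real obstacle; the substantive work was already done in Proposition~\ref{lemma:aasd78y}.
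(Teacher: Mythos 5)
Your proof is correct and follows essentially the same route as the paper: specialize Proposition~\ref{lemma:aasd78y} to $w(\bfx)=1/\binom{n}{|\bfx|}$ so that (\ref{eq:as897ds}) becomes equivalent to (\ref{eq:ds67dd}), and then observe that this condition says exactly that $\Pr(\boldsymbol{\chi}(t)=\bfx)$ depends only on $|\bfx|$, i.e., the states are exchangeable. Your explicit handling of the excluded case $\bfx=\mathbf{0}$ is a small point the paper leaves implicit, but the argument is the same.
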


\begin{proof}
Using (\ref{eq:phikaa}) and  Proposition~\ref{lemma:aasd78y}, we see that condition (\ref{eq:as897ds}) is equivalent to
\begin{equation}\label{eq:ds67dd}
\Pr(\boldsymbol{\chi}(t)=\mathbf{x}) ~=~ \frac{1}{{n\choose |\bfx|}}\,\sum_{|\mathbf{z}|=|\bfx|}\Pr(\boldsymbol{\chi}(t)=\mathbf{z}).
\end{equation}
Equivalently, we have $\Pr(\boldsymbol{\chi}(t)=\mathbf{x})=\Pr(\boldsymbol{\chi}(t)=\mathbf{x}')$ for every $\bfx,\bfx'\in\{0,1\}^n$ such that
$|\bfx|=|\bfx'|$. This condition clearly means that $\chi_1(t),\ldots,\chi_n(t)$ are exchangeable.
\end{proof}

The following well-known proposition (see for instance \cite[Chap.~1]{Spi01} and \cite[Section~2]{Duk07}) yields an interesting interpretation of the exchangeability of the component states $\chi_1(t),\ldots,\chi_n(t)$. For the sake of self-containment, a proof is given here.

\begin{proposition}
For every $t>0$, the component states $\chi_1(t),\ldots,\chi_n(t)$ are exchangeable if and only if the probability that a group of components
survives beyond $t$ (i.e., the reliability of this group at time $t$) depends only on the number of components in the group.
\end{proposition}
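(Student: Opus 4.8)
The plan is to prove both implications directly from the definitions, translating ``reliability of a group depends only on its size'' into an equality among the atom probabilities $\Pr(\boldsymbol{\chi}(t)=\mathbf{x})$. For a subset $A\subseteq[n]$, let $R_A(t)=\Pr(X_i>t \text{ for all } i\in A)$ denote the reliability of the group $A$ at time $t$. The first observation is the tautology
\[
R_A(t)=\sum_{\mathbf{x}\colon x_i=1\,\forall i\in A}\Pr(\boldsymbol{\chi}(t)=\mathbf{x}),
\]
i.e.\ $R_A(t)$ is the sum of the atom probabilities over all states in which every component of $A$ is up (the states of components outside $A$ being arbitrary).

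For the ``only if'' direction I would argue as follows. Assume $\chi_1(t),\ldots,\chi_n(t)$ are exchangeable. Then $\Pr(\boldsymbol{\chi}(t)=\mathbf{x})=\Pr(\boldsymbol{\chi}(t)=\mathbf{x}')$ whenever $|\mathbf{x}|=|\mathbf{x}'|$, since any two such states are related by a coordinate permutation. Hence in the sum above, the value of $\Pr(\boldsymbol{\chi}(t)=\mathbf{x})$ depends only on $|\mathbf{x}|$, and for each $j$ the number of states $\mathbf{x}$ with $x_i=1$ for all $i\in A$ and $|\mathbf{x}|=j$ is $\binom{n-|A|}{j-|A|}$, which depends only on $|A|$ and $j$. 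Therefore $R_A(t)$ depends only on $|A|$.

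For the ``if'' direction, assume $R_A(t)$ depends only on $|A|$; write $R_A(t)=r_{|A|}(t)$. I would recover the atom probabilities from the group reliabilities by inclusion–exclusion: for a fixed $\mathbf{x}$ with support $S=\{i:x_i=1\}$, the event $\{\boldsymbol{\chi}(t)=\mathbf{x}\}$ equals ``all components in $S$ are up and all components in $[n]\setminus S$ are down'', so
\[
\Pr(\boldsymbol{\chi}(t)=\mathbf{x})=\sum_{B\subseteq [n]\setminus S}(-1)^{|B|}\,R_{S\cup B}(t)=\sum_{B\subseteq [n]\setminus S}(-1)^{|B|}\,r_{|S|+|B|}(t).
\]
The right-hand side depends only on $|S|=|\mathbf{x}|$, so $\Pr(\boldsymbol{\chi}(t)=\mathbf{x})$ depends only on $|\mathbf{x}|$, which is precisely exchangeability of $\chi_1(t),\ldots,\chi_n(t)$.

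I do not expect a serious obstacle here; the statement is essentially a bookkeeping identity. The one point requiring a little care is the inclusion–exclusion step in the ``if'' direction—making sure the events are set up so that $\Pr(\boldsymbol{\chi}(t)=\mathbf{x})$ is written as a signed sum of group-reliabilities $R_{S\cup B}(t)$ and then invoking the hypothesis that each such term depends only on its cardinality. Alternatively, one could avoid inclusion–exclusion entirely and instead induct downward on $|\mathbf{x}|$: from $R_S(t)=\sum_{\mathbf{x}\supseteq S}\Pr(\boldsymbol{\chi}(t)=\mathbf{x})$ one peels off, for decreasing cardinality, the contribution of strictly larger supports (already known to depend only on size by the inductive hypothesis), leaving $\binom{n-|S|}{0}\Pr(\boldsymbol{\chi}(t)=\mathbf{1}_S)$ plus size-dependent terms; this gives the same conclusion and may read more cleanly.
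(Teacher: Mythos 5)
Your proof is correct and follows essentially the same route as the paper's: both directions rest on the observation that exchangeability of the indicators is equivalent to the atom probabilities $\Pr(\boldsymbol{\chi}(t)=\mathbf{x})$ depending only on $|\mathbf{x}|$, with the forward implication obtained by summing atoms over supersets and the converse by the inclusion--exclusion (M\"obius) inversion $\Pr(\boldsymbol{\chi}(t)=\mathbf{1}_A)=\sum_{B\supseteq A}(-1)^{|B|-|A|}\overline{F}_B(t)$, which is exactly your signed sum after reindexing. No gaps; your version merely spells out the binomial counting that the paper leaves implicit.
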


\begin{proof}
Let $A\subseteq [n]$ be a group of components. The exchangeability of the component states
means that, for every $B\subseteq [n]$, the probability $\Pr(\boldsymbol{\chi}(t)=\mathbf{1}_B)$ depends only on $|B|$. In this case, the probability that the group $A$ survives beyond $t$, that is
$$
\overline{F}_A(t)=\sum_{B\supseteq A}\Pr(\boldsymbol{\chi}(t)=\mathbf{1}_B),
$$
depends only on $|A|$. Conversely, if $\overline{F}_B(t)$ depends only on $|B|$ for every $B\subseteq [n]$, then
$$
\Pr(\boldsymbol{\chi}(t)=\mathbf{1}_A)=\sum_{B\supseteq A} (-1)^{|B|-|A|}\,\overline{F}_B(t)
$$
depends only on $|A|$.
\end{proof}

\begin{remark}\label{rem:sd8f7}
Theorem~\ref{thm:aasd78} shows that the exchangeability of the component lifetimes is sufficient but not necessary for (\ref{eq:as897ds}) to
hold for every $\phi\in\Phi_n$ and every $t>0$. Indeed, the exchangeability of the component lifetimes entails the exchangeability of the
component states. This follows for instance from the identity (see \cite[Eq.~(6)]{DukMar08})
    $$
    \Pr(\boldsymbol{\chi}(t)=\mathbf{1}_A)=\sum_{B\subseteq A} (-1)^{|A|-|B|}\, F(t\mathbf{1}_{[n]\setminus B}+\infty\mathbf{1}_B).
    $$
    However, the converse statement is not true in general. As an example, consider the random vector $(X_1,X_2)$ which takes each of the values $(2,1)$, $(4,2)$, $(1,3)$ and $(3,4)$ with probability 1/4. The state variables $\chi_1(t)$ and $\chi_2(t)$ are exchangeable at any time $t$. Indeed, one can easily see that, for $|\bfx|=1$,
    $$
    \Pr(\boldsymbol{\chi}(t)=\bfx)=
    \begin{cases}
    1/4, & \mbox{if $t\in [1,4)$},\\
    0, & \mbox{otherwise}.
    \end{cases}
    $$
    However, the variables $X_1$ and $X_2$ are not exchangeable since, for instance,
    $$
    0=F(1.5, 2.5)\neq F(2.5,1.5)=1/4.
    $$
\end{remark}

\section{Alternative decomposition of the system reliability}

Assuming only that $F$ has no ties (i.e., $\Pr(X_i=X_j)=0$ for every $i\neq j$), we now provide necessary and sufficient conditions on $F$ for
formula (\ref{eq:sdf76}) to hold for every $\phi\in\Phi_n$, thus answering a question raised implicitly in \cite[p.~320]{NavSamBalBha08}.

Let $q\colon 2^{[n]}\to [0,1]$ be the \emph{relative quality function} (associated with $F$), which is defined as
$$
q(A)=\Pr\Big(\max_{i\in [n]\setminus A}X_i<\min_{i\in A}X_i\Big)
$$
with the convention that $q(\varnothing)=q([n])=1$ (see \cite[Section~2]{MarMatb}). By definition, $q(A)$ is the probability that the $|A|$ components having the longest lifetimes are exactly those in $A$. It then immediately follows that the function $q$ satisfies the following important property:
\begin{equation}\label{eq:sdf5sds}
\sum_{|\bfx|=k}q(\bfx)=1,\qquad k\in [n].
\end{equation}

Under the assumption that $F$ has no ties, the authors
\cite[Thm.~3]{MarMatb} proved that
\begin{equation}\label{eq:asd7}
\Pr(T=X_{k:n})=\phi^q_{n-k+1}-\phi^q_{n-k}\, ,
\end{equation}
where $\phi^q_k$ is defined in (\ref{eq:sdf65}).

Combining (\ref{eq:asd7}) with Proposition~\ref{lemma:aasd78y}, we immediately derive the following result.

\begin{theorem}\label{thm:aasd78zz}
Assume that $F$ has no ties. For every $t>0$, the representation (\ref{eq:sdf76}) holds for every $\phi\in\Phi_n$ if and only if
\begin{equation}\label{eq:yx6czz}
\Pr(\boldsymbol{\chi}(t)=\mathbf{x}) ~=~ q(\bfx)\,\sum_{|\mathbf{z}|=|\bfx|}\Pr(\boldsymbol{\chi}(t)=\mathbf{z}).
\end{equation}
\end{theorem}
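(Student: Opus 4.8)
The plan is to reduce Theorem~\ref{thm:aasd78zz} to an immediate application of Proposition~\ref{lemma:aasd78y} with the specific weight function $w=q$. First I would recall that, since $F$ has no ties, identity~(\ref{eq:asd7}) holds, i.e.\ $\Pr(T=X_{k:n})=\phi^q_{n-k+1}-\phi^q_{n-k}$ for every $\phi\in\Phi_n$ and every $k\in[n]$. Substituting this into~(\ref{eq:sdf76}), the representation to be characterized,
$$
\overline{F}_S(t)=\sum_{k=1}^n\Pr(T=X_{k:n})\,\overline{F}_{k:n}(t),
$$
becomes
$$
\overline{F}_S(t)=\sum_{k=1}^n\big(\phi^q_{n-k+1}-\phi^q_{n-k}\big)\,\overline{F}_{k:n}(t),
$$
which is precisely the hypothesis of Proposition~\ref{lemma:aasd78y} with $w=q$.

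Next I would invoke Proposition~\ref{lemma:aasd78y} directly: for a fixed $t>0$, the displayed equality holds for every $\phi\in\Phi_n$ if and only if condition~(\ref{eq:sad75fdf}) holds with $w=q$, that is,
$$
\Pr(\boldsymbol{\chi}(t)=\mathbf{x})=q(\bfx)\sum_{|\mathbf{z}|=|\bfx|}\Pr(\boldsymbol{\chi}(t)=\mathbf{z})\qquad\text{for every }\bfx\neq\mathbf{0},
$$
which is exactly~(\ref{eq:yx6czz}). One small point worth noting: the condition~(\ref{eq:yx6czz}) as stated in the theorem is quantified over all $\bfx$ (including $\bfx=\mathbf{0}$), but for $\bfx=\mathbf{0}$ it reads $\Pr(\boldsymbol{\chi}(t)=\mathbf{0})=q(\mathbf{0})\,\Pr(\boldsymbol{\chi}(t)=\mathbf{0})=\Pr(\boldsymbol{\chi}(t)=\mathbf{0})$ by the convention $q(\varnothing)=1$, hence it is automatically satisfied; so~(\ref{eq:yx6czz}) is equivalent to the $\bfx\neq\mathbf{0}$ version appearing in~(\ref{eq:sad75fdf}).

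Since the proof is just a substitution followed by citing two earlier results, there is no serious obstacle; the only things to be careful about are (i) making sure that the algebraic form of~(\ref{eq:sdf76}) after plugging in~(\ref{eq:asd7}) matches verbatim the left-hand side in the statement of Proposition~\ref{lemma:aasd78y}, and (ii) checking that the relative quality function $q$ is an admissible choice of weight $w$ — which it is, being a function $\{0,1\}^n\to\R$, and moreover the compatibility condition $\sum_{|\mathbf{z}|=|\bfx|}q(\mathbf{z})=1$ from~(\ref{eq:sdf5sds}) is exactly the one flagged in the Remark after Proposition~\ref{lemma:aasd78y} as the relevant case. I would close by remarking that~(\ref{eq:yx6czz}) can be read as a symmetry/proportionality statement: within each fixed cardinality level $k$, the state-vector probabilities $\Pr(\boldsymbol{\chi}(t)=\mathbf{x})$ are proportional to the corresponding relative qualities $q(\bfx)$.
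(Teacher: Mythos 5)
Your proposal is correct and follows exactly the paper's own argument: the paper proves this theorem by combining identity~(\ref{eq:asd7}) with Proposition~\ref{lemma:aasd78y} applied to $w=q$, precisely the substitution you describe. Your additional check that the case $\bfx=\mathbf{0}$ of~(\ref{eq:yx6czz}) is vacuous by the convention $q(\varnothing)=1$ is a worthwhile clarification, but the route is the same.
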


Condition~(\ref{eq:yx6czz}) has the following interpretation. We first observe that, for every $A\subseteq [n]$,
$$
\boldsymbol{\chi}(t)=\mathbf{1}_A \quad\Leftrightarrow\quad \max_{i\in [n]\setminus A} X_i\leqslant t<\min_{i\in A}X_i\, .
$$
Assuming that $q$ is a strictly positive function, condition~(\ref{eq:yx6czz}) then means that the conditional probability
$$
 \frac{\Pr(\boldsymbol{\chi}(t)=\mathbf{1}_A)}{q(A)}=\Pr\Big(\max_{i\in [n]\setminus A} X_i\leqslant t<\min_{i\in A}X_i\,\Big|\, \max_{i\in [n]\setminus A} X_i<\min_{i\in A}X_i\Big)
$$
depends only on $|A|$.

\begin{remark}\label{remarkweak}
The concept of weak exchangeability was introduced in Navarro et al.~\cite[p.~320]{NavSamBalBha08} as follows. A random vector
$(X_1,\ldots,X_n)$ is said to be \emph{weakly exchangeable} if
\[
\Pr(X_{k:n}\leqslant t)=\Pr(X_{k:n}\leqslant t \mid X_{\sigma(1)}<\cdots<X_{\sigma(n)}),
\]
for every $t>0$, every $k\in [n]$, and every permutation $\sigma$ on $[n]$. Theorem 3.6 in \cite{NavSamBalBha08} states that if $F$ has no ties
and $(X_1,\ldots,X_n)$ is weakly exchangeable, then (\ref{eq:sdf76}) holds for every $\phi\in\Phi_n$. By Theorem~\ref{thm:aasd78zz}, we see that
weak exchangeability implies condition (\ref{eq:yx6czz}) whenever $F$ has no ties. However, the converse is not true in general. Indeed, in the
example of Remark~\ref{rem:sd8f7}, we can easily see that condition (\ref{eq:yx6czz}) holds, while the lifetimes $X_1$ and $X_2$ are not weakly
exchangeable.
\end{remark}

We now investigate condition (\ref{eq:wer876}) under the sole assumption that $F$ has no ties. Navarro and Rychlik~\cite[Lemma~1]{NavRyc07} (see
also \cite[Rem.~4]{MarMatb}) proved that this condition holds for every $\phi\in\Phi_n$ whenever the component lifetimes $X_1,\ldots,X_n$ are
exchangeable. The following proposition gives a necessary and sufficient condition on $F$ (in terms of the function $q$) for (\ref{eq:wer876})
to hold for every $\phi\in\Phi_n$.

The function $q$ is said to be \emph{symmetric} if $q(\bfx)=q(\bfx')$ whenever $|\bfx|=|\bfx'|$. By (\ref{eq:sdf5sds}) it follows that $q$ is symmetric if and only if $q(\bfx)=1/{n\choose|\bfx|}$ for every $\bfx\in\{0,1\}^n$.

\begin{proposition}\label{prop:aasd78zzz}
Assume that $F$ has no ties. Condition (\ref{eq:wer876}) holds for every $\phi\in\Phi_n$ if and only if $q$ is symmetric.
\end{proposition}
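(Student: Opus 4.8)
The plan is to reduce condition (\ref{eq:wer876}) to an explicit system of equations on the function $q$ and then compare it with the characterization of the symmetric case. Starting from the identity (\ref{eq:asd7}), namely $\Pr(T=X_{k:n})=\phi^q_{n-k+1}-\phi^q_{n-k}$, which holds for every $\phi\in\Phi_n$ under the no-ties assumption, condition (\ref{eq:wer876}) becomes
\[
\phi^q_{n-k+1}-\phi^q_{n-k}=\phi_{n-k+1}-\phi_{n-k}\qquad\text{for every }k\in[n]\text{ and every }\phi\in\Phi_n.
\]
Since both telescoping sums start from $\phi^q_0=\phi_0=0$ (the empty-set term vanishes because $\phi(\mathbf{0})=0$ for coherent $\phi$), summing the above equalities from $n-k+1$ down shows that this is equivalent to $\phi^q_k=\phi_k$ for every $k\in[n]$ and every $\phi\in\Phi_n$. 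Recalling the definitions $\phi^q_k=\sum_{|\bfx|=k}q(\bfx)\phi(\bfx)$ and $\phi_k=\frac{1}{\binom nk}\sum_{|\bfx|=k}\phi(\bfx)$, this says exactly
\[
\sum_{|\bfx|=k}\Big(q(\bfx)-\tfrac{1}{\binom nk}\Big)\,\phi(\bfx)=0\qquad\text{for every }k\in[n]\text{ and every }\phi\in\Phi_n.
\]

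The next step is to apply Lemma~\ref{lemma:sd876} to conclude. Define $\lambda\colon\{0,1\}^n\to\R$ by $\lambda(\bfx)=q(\bfx)-1/\binom{n}{|\bfx|}$ for $\bfx\neq\mathbf0$ and $\lambda(\mathbf0)=0$. Then the displayed equations, summed over all $k\in[n]$, give $\sum_{\bfx}\lambda(\bfx)\phi(\bfx)=0$ for every $\phi\in\Phi_n$, and Lemma~\ref{lemma:sd876} forces $\lambda(\bfx)=0$ for all $\bfx\neq\mathbf0$, i.e.\ $q(\bfx)=1/\binom{n}{|\bfx|}$ for every $\bfx$, which by the remark preceding the proposition is precisely the statement that $q$ is symmetric. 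Conversely, if $q$ is symmetric then $q(\bfx)=1/\binom{n}{|\bfx|}$, so $\phi^q_k=\phi_k$ identically and (\ref{eq:wer876}) holds via (\ref{eq:asd7}); this direction is already essentially recorded in the text (it is the exchangeable case of Navarro--Rychlik, but here it follows directly).

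The only subtlety worth flagging — and the place where one should be careful rather than the place where there is real difficulty — is the passage between the ``per-level'' equations $\phi^q_k=\phi_k$ and the single global equation $\sum_\bfx\lambda(\bfx)\phi(\bfx)=0$: Lemma~\ref{lemma:sd876} is stated for a single $\lambda$, so one must observe that summing over $k$ loses no information, since the supports $\{\bfx:|\bfx|=k\}$ for distinct $k$ are disjoint and $\lambda$ already encodes all levels simultaneously. (Alternatively, one can note that the lemma's proof actually shows the level-$k$ restrictions $\{\phi_A:|A|=k\}$ can be arranged to force $\lambda$ to vanish on level $k$, but invoking the lemma as a black box on the combined $\lambda$ is cleaner.) Everything else is bookkeeping with the telescoping identity and the definitions, so this is the lightest of the three analogous results and needs no new idea.
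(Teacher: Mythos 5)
Your proof is correct and follows essentially the same route as the paper: both reduce (\ref{eq:wer876}) via (\ref{eq:asd7}) to a linear identity in $\phi$ and then invoke Lemma~\ref{lemma:sd876}. Your intermediate telescoping to the per-level equalities $\phi^q_k=\phi_k$ (and the summation over $k$) is a harmless repackaging of the paper's direct application of the lemma to the Kronecker-delta form of the two sides for each $k$.
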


\begin{proof}
By (\ref{eq:asd7}) we have
\[
\Pr(T=X_{k:n})=\sum_{\bfx\in\{0,1\}^n}\big(\delta_{|\bfx|,n-k+1}-\delta_{|\bfx|,n-k}\big)\, q(\bfx)\,\phi(\bfx),\qquad k\in [n],
\]
where $\delta$ stands for the Kronecker delta. Similarly, by (\ref{eq:phikaa}) we have
\[
\phi_{n-k+1}-\phi_{n-k}=\sum_{\bfx\in\{0,1\}^n}\big(\delta_{|\bfx|,n-k+1}-\delta_{|\bfx|,n-k}\big)\, \frac{1}{{n\choose
|\bfx|}}\,\phi(\bfx),\qquad k\in [n].
\]
The result then follows from Lemma \ref{lemma:sd876}.
\end{proof}

We end this paper by studying the special case where both conditions (\ref{eq:sdf76}) and (\ref{eq:as897ds}) hold. We have the following result.

\begin{theorem}\label{thm:aasd78zzzz}
Assume that $F$ has no ties. The following assertions are equivalent.
\begin{enumerate}
\item[$(i)$]  Conditions (\ref{eq:sdf76}) and (\ref{eq:as897ds}) hold for every $\phi\in\Phi_n$ and every $t>0$.

\item[$(ii)$] Condition (\ref{eq:wer876}) holds for every $\phi\in\Phi_n$ and the variables $\chi_1(t),\ldots,\chi_n(t)$ are exchangeable for
every $t>0$.

\item[$(iii)$] The function $q$ is symmetric and the variables $\chi_1(t),\ldots,\chi_n(t)$ are exchangeable for every $t>0$.
\end{enumerate}
\end{theorem}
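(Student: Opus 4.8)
The plan is to prove the cycle of implications $(i)\Rightarrow(ii)\Rightarrow(iii)\Rightarrow(i)$, leaning on the results already established in the paper. The implication $(ii)\Rightarrow(iii)$ is the easiest: condition~(\ref{eq:wer876}) holding for every $\phi\in\Phi_n$ is, by Proposition~\ref{prop:aasd78zzz}, equivalent to $q$ being symmetric, so $(ii)$ and $(iii)$ are in fact term-by-term equivalent statements and there is nothing further to do.

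For $(i)\Rightarrow(ii)$, I would argue as follows. Assume both (\ref{eq:sdf76}) and (\ref{eq:as897ds}) hold for every $\phi\in\Phi_n$ and every $t>0$. By Theorem~\ref{thm:aasd78}, the validity of (\ref{eq:as897ds}) for all $\phi\in\Phi_n$ at a fixed $t$ is equivalent to the exchangeability of $\chi_1(t),\ldots,\chi_n(t)$; since this is assumed for every $t>0$, we get the exchangeability part of $(ii)$ immediately. It remains to derive (\ref{eq:wer876}). Here the idea is that subtracting the two decompositions forces the coefficients to agree: for each $\phi\in\Phi_n$ and each $t>0$,
\[
\sum_{k=1}^n\Big(\Pr(T=X_{k:n})-(\phi_{n-k+1}-\phi_{n-k})\Big)\overline{F}_{k:n}(t)=0 .
\]
The natural way to exploit this is to choose $t$ so that the functions $\overline{F}_{k:n}$ become ``independent enough'' to separate the coefficients. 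Concretely, since $F$ has no ties, one can find values $t$ for which the vector $(\overline{F}_{1:n}(t),\ldots,\overline{F}_{n:n}(t))$ takes suitable values (e.g. letting $t\to 0$ gives all $1$'s, and as $t$ increases the order statistics ``switch off'' one at a time in probability in a way that, combined with continuity, spans enough directions). A cleaner route, and the one I would actually write: apply the identity (\ref{eq:sf676xy}) together with the established equivalences to convert the difference of the two representations back into a statement of the form $\sum_{\bfx}\lambda(\bfx)\phi(\bfx)=0$ for all $\phi\in\Phi_n$, where $\lambda$ encodes the difference $\Pr(T=X_{k:n})-(\phi_{n-k+1}-\phi_{n-k})$ weighted by the (now $t$-independent, by exchangeability) quantities $\sum_{|\mathbf z|=k}\Pr(\boldsymbol\chi(t)=\mathbf z)$; choosing a $t$ at which these weights are all nonzero and invoking Lemma~\ref{lemma:sd876} yields (\ref{eq:wer876}). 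The main obstacle is this step: one must be careful that the weights $\overline{F}_{n-k+1:n}(t)-\overline{F}_{n-k:n}(t)$ are simultaneously nonzero for some single time $t$, which is where the no-ties hypothesis is used — if $F$ has no ties then for $t$ small enough (but positive, below the smallest possible value taken by any $X_i$ is not guaranteed, so one argues instead via a limiting/continuity argument or picks $t$ in the interior of the support appropriately) all these gaps are positive.

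For $(iii)\Rightarrow(i)$, I would reverse the above. Symmetry of $q$ gives, via Proposition~\ref{prop:aasd78zzz}, that (\ref{eq:wer876}) holds for every $\phi\in\Phi_n$; that is, $\Pr(T=X_{k:n})=\phi_{n-k+1}-\phi_{n-k}$ for all $k$. Plugging this into the Samaniego-type decomposition: by Theorem~\ref{thm:aasd78zz} (applicable since $F$ has no ties), and because symmetry of $q$ means $q(\bfx)=1/\binom{n}{|\bfx|}$, condition~(\ref{eq:yx6czz}) reduces to condition~(\ref{eq:ds67dd}), which by the proof of Theorem~\ref{thm:aasd78} is exactly the exchangeability of $\chi_1(t),\ldots,\chi_n(t)$. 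Since exchangeability of the states for every $t>0$ is assumed, (\ref{eq:yx6czz}) holds for every $t>0$, hence by Theorem~\ref{thm:aasd78zz} representation (\ref{eq:sdf76}) holds for every $\phi\in\Phi_n$ and every $t>0$; and exchangeability of the states, via Theorem~\ref{thm:aasd78}, gives (\ref{eq:as897ds}) for every $\phi\in\Phi_n$ and every $t>0$. This establishes $(i)$ and closes the cycle. I expect this direction to be essentially a bookkeeping assembly of the earlier theorems, with the only subtlety being to note explicitly that symmetry of $q$ collapses the weight $q(\bfx)$ appearing in (\ref{eq:yx6czz}) to the uniform weight $1/\binom{n}{|\bfx|}$ of (\ref{eq:ds67dd}), so that the two exchangeability conditions coincide.
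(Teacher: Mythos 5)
Your overall architecture is sound and, apart from the routing, matches the paper: $(ii)\Leftrightarrow(iii)$ is indeed immediate from Proposition~\ref{prop:aasd78zzz}, and your $(iii)\Rightarrow(i)$ assembly (symmetry of $q$ collapses (\ref{eq:yx6czz}) to (\ref{eq:ds67dd}), then invoke Theorems~\ref{thm:aasd78zz} and \ref{thm:aasd78}) is correct. The genuine gap is in $(i)\Rightarrow(ii)$, precisely at the step you flag as ``the main obstacle.'' Two problems. First, a small one: exchangeability of the states does \emph{not} make the weights $\sum_{|\mathbf{z}|=k}\Pr(\boldsymbol{\chi}(t)=\mathbf{z})=\overline{F}_{n-k+1:n}(t)-\overline{F}_{n-k:n}(t)$ independent of $t$; it only makes $\Pr(\boldsymbol{\chi}(t)=\bfx)$ depend on $\bfx$ through $|\bfx|$ alone. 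Second, and fatally for the argument as written: it is \emph{false} that the no-ties hypothesis yields a single $t>0$ at which all the gaps $\overline{F}_{n-k+1:n}(t)-\overline{F}_{n-k:n}(t)$, $k\in[n-1]$, are simultaneously positive. Take $(X_1,X_2,X_3)$ equal to $(1,2,3)$ or $(4,5,6)$ each with probability $1/2$: $F$ has no ties, yet $\Pr(X_{2:3}\leqslant t<X_{3:3})>0$ only for $t\in[2,3)\cup[5,6)$ while $\Pr(X_{1:3}\leqslant t<X_{2:3})>0$ only for $t\in[1,2)\cup[4,5)$, and these sets are disjoint. Your fallback (``$t$ small enough,'' or an unspecified ``limiting/continuity argument'') does not repair this.

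The repair is easy but must be said: you never needed a common $t$. Applying Lemma~\ref{lemma:sd876} at each fixed $t$ gives, for every $\bfx\neq\mathbf{0}$ and \emph{every} $t>0$, an identity of the form $\bigl(c(\bfx)\bigr)\sum_{|\mathbf{z}|=|\bfx|}\Pr(\boldsymbol{\chi}(t)=\mathbf{z})=0$ where $c(\bfx)$ does not depend on $t$; so for each level $k$ separately it suffices to exhibit \emph{some} $t_k>0$ with $\Pr(X_{n-k:n}\leqslant t_k<X_{n-k+1:n})>0$. This is exactly where the paper uses no ties, and the existence argument is the piece missing from your proposal: if $\Pr(X_{n-k:n}\leqslant t<X_{n-k+1:n})=0$ for all $t>0$, then taking the union of these events over all rational $t>0$ (a countable dense set) gives $\Pr(X_{n-k:n}<X_{n-k+1:n})=0$, contradicting the absence of ties. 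With that lemma inserted, your cycle of implications closes; note also that the paper proves $(i)\Rightarrow(iii)$ directly (via $q(\bfx)=1/\binom{n}{|\bfx|}$) rather than $(i)\Rightarrow(ii)$, but by Proposition~\ref{prop:aasd78zzz} the two targets are interchangeable.
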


\begin{proof}
$(ii)\Leftrightarrow (iii)$ Follows from Proposition~\ref{prop:aasd78zzz}.

$(ii)\Rightarrow (i)$ Follows from Theorem~\ref{thm:aasd78}.

$(i)\Rightarrow (iii)$ By Theorem~\ref{thm:aasd78}, we only need to prove that $q$ is symmetric. Combining (\ref{eq:ds67dd}) with
(\ref{eq:yx6czz}), we obtain
$$
\bigg(q(\bfx)-\frac{1}{{n\choose |\bfx|}}\bigg)\,\sum_{|\mathbf{z}|=|\bfx|}\Pr(\boldsymbol{\chi}(t)=\mathbf{z})=0.
$$
To conclude, we only need to prove that, for every $k\in [n-1]$, there exists $t>0$ such that
$$
\sum_{|\mathbf{z}|=k}\Pr(\boldsymbol{\chi}(t)=\mathbf{z})>0.
$$
Suppose that this is not true. By (\ref{eq:sf676xy}), there exists $k\in [n-1]$ such that
$$
0=\overline{F}_{n-k+1:n}(t)-\overline{F}_{n-k:n}(t)=\Pr(X_{n-k:n}\leqslant t< X_{n-k+1:n})
$$
for every $t>0$. Then, denoting the set of positive rational numbers by $\Q^+$, the sequence of events
$$
E_m=(X_{n-k:n}\leqslant t_m< X_{n-k+1:n}),\qquad m\in\N,
$$
where $\{t_m : m\in\N\}=\Q^+$, satisfies $\Pr(E_m)=0$. Since $\Q^+$ is dense in $(0,\infty)$, we obtain
$$
\Pr(X_{n-k:n}< X_{n-k+1:n})=\Pr\bigg(\bigcup_{m\in\N}\, E_m\bigg)=0,
$$
which contradicts the assumption that $F$ has no ties.
\end{proof}

The following two examples show that neither of the conditions (\ref{eq:sdf76}) and (\ref{eq:as897ds}) implies the other.

\begin{example}
Let $(X_1,X_2,X_3)$ be the random vector which takes the values $(1, 2, 3)$, $(1, 3, 2)$, $(2, 1, 3)$, $(2, 3, 1)$, $(3, 2, 1)$, $(3, 1, 2)$, with probabilities $p_1,\ldots,p_6$, respectively. It was shown in \cite[Example~3.7]{NavSamBalBha08} that (\ref{eq:sdf76}) holds for every $\phi\in\Phi_n$ and every $t>0$. However, we can easily see that $\chi_1(t)$, $\chi_2(t)$, $\chi_3(t)$ are exchangeable for every $t>0$ if and only if $(p_1,\ldots,p_6)$ is a convex combination of $(0,1/3,1/3,0,1/3,0)$ and $(1/3,0,0,1/3,0,1/3)$. Hence, when the latter condition is not satisfied, (\ref{eq:as897ds}) does not hold for every $\phi\in\Phi_n$ by Theorem~\ref{thm:aasd78}.
\end{example}

\begin{example}
Let $(X_1,X_2,X_3)$ be the random vector which takes the values $(1, 2, 4)$, $(2, 4, 5)$, $(3, 1, 2)$, $(4, 2, 3)$, $(5, 3, 4)$, $(2, 3, 1)$, $(3, 4, 2)$, $(4, 5, 3)$ with probabilities $p_1=\cdots =p_8=1/8$. We have
$$
q(\{1\})=q(\{2\})=q(\{1,2\})=q(\{1,3\})=3/8\quad\mbox{and}\quad q(\{3\})=q(\{2,3\})=2/8,
$$
which shows that $q$ is not symmetric. However, we can easily see that $\chi_1(t)$, $\chi_2(t)$, $\chi_3(t)$ are exchangeable for every $t>0$. Indeed, we have
    $$
    \Pr(\boldsymbol{\chi}(t)=\bfx)=
    \begin{cases}
    1/8, & \mbox{if $t\in [\alpha,\beta)$},\\
    0, & \mbox{otherwise},
    \end{cases}
    $$
where $(\alpha,\beta)=(2,5)$ whenever $|\bfx|=1$ and $(\alpha,\beta)=(1,4)$ whenever $|\bfx|=2$. Thus (\ref{eq:as897ds}) holds for every $\phi\in\Phi_n$ and every $t>0$ by Theorem~\ref{thm:aasd78}. However, (\ref{eq:sdf76}) does not hold for every $\phi\in\Phi_n$ and every $t>0$ by Theorem~\ref{thm:aasd78zzzz}.
\end{example}

\begin{remark}
Let $\Phi'_n$ be the class of structure functions of $n$-component \emph{semicoherent} systems, that is, the class of nondecreasing functions
$\phi\colon\{0,1\}^n\to\{0,1\}$ satisfying the boundary conditions $\phi(\mathbf{0})=0$ and $\phi(\mathbf{1})=1$. It is clear that
Proposition~\ref{thm:thm1} and Lemma~\ref{lemma:sd876} still hold, even for $n=2$, if we extend the set $\Phi_n$ to $\Phi'_n$ (in the proof of
Lemma~\ref{lemma:sd876} it is then sufficient to consider the $2^n-1$ functions $\phi_A(\bfx)=\prod_{i\in A}x_i$, $A\neq\varnothing$). We then
observe that Propositions~\ref{lemma:aasd78y} and \ref{prop:aasd78zzz} and Theorems~\ref{thm:aasd78}, \ref{thm:aasd78zz}, and
\ref{thm:aasd78zzzz} (which use Proposition~\ref{thm:thm1} and Lemma~\ref{lemma:sd876} to provide conditions on $F$ for certain identities to
hold for every $\phi\in\Phi_n$) are still valid for $n\geqslant 2$ if we replace $\Phi_n$ with $\Phi'_n$ (that is, if we consider semicoherent
systems instead of coherent systems only). This observation actually strengthens these results. For instance, from Theorem~\ref{thm:aasd78} we
can state that, for every fixed $t>0$, if (\ref{eq:as897ds}) holds for every $\phi\in\Phi_n$, then the variables $\chi_1(t),\ldots,\chi_n(t)$
are exchangeable; conversely, for every $n\geqslant 2$ and every $t>0$, the latter condition implies that (\ref{eq:as897ds}) holds for every
$\phi\in\Phi'_n$. We also observe that the ``semicoherent'' version of Theorem~\ref{thm:aasd78} (i.e., where $\Phi_n$ is replaced with
$\Phi'_n$) was proved by Dukhovny \cite[Thm.~4]{Duk07}.
\end{remark}

\section*{Acknowledgments}

The authors wish to thank M.~Couceiro, G.~Peccati, and F.~Spizzichino for fruitful discussions. Jean-Luc Marichal and Pierre Mathonet are supported by
the internal research project F1R-MTH-PUL-09MRDO of the University of Luxembourg. Tam\'as Waldhauser is supported by the National Research Fund
of Luxembourg, the Marie Curie Actions of the European Commission (FP7-COFUND), and the Hungarian National Foundation for Scientific Research
under grant no. K77409.

\appendix
\section{}
\label{app:lemma}

In this appendix we construct $2^n-1$ functions in $\Phi_n$ which are linearly independent when considered as real functions. Here the
assumption $n\geqslant 3$ is crucial.

Assume first that $n\neq 4$ and let $\pi$ be the permutation on $[n]$ defined by the following cycles
$$
\pi =%
\begin{cases}
(1,2,\ldots,n), & \mbox{if $n$ is odd},\\
(1,2,3)\circ(4,5,\ldots,n), & \mbox{if $n$ is even}.
\end{cases}
$$
With every $A\varsubsetneq [n]$, $A\neq\varnothing$, we associate $A^*\subseteq [n]$ in the following way:
\begin{itemize}
\item if $|A|\leqslant n-2$, then we choose any set $A^*$ such that $|A^*|=n-1$ and $A\cup A^*=[n]$;

\item if $A=[n]\setminus\{k\}$ for some $k\in [n]$, then we take $A^*=[n]\setminus\{\pi(k)\}$.
\end{itemize}

We now show that the $2^n-1$ functions $\phi_A\in\Phi_n$, $A\subseteq [n]$, $A\neq\varnothing$, defined by
$$
\phi_A(\bfx)=
\begin{cases}
\big(\prod_{i\in A}x_i\big)\amalg\big(\prod_{i\in A^*}x_i\big), & \mbox{if $A\neq [n]$},\\
\prod_{i\in [n]}x_i\, , & \mbox{if $A=[n]$},
\end{cases}
$$
where $\amalg$ denotes the coproduct (i.e., $x\amalg y=x+y-xy$), are linearly independent when considered as real functions.

Suppose there exist real numbers $c_A$, $A\subseteq [n]$, $A\neq\varnothing$, such that
$$
\sum_{A\neq\varnothing} c_A\,\phi_A=0.
$$
Expanding the left-hand side of this equation as a linear combination of the functions $\prod_{i\in B}x_i$, $B\subseteq [n]$,
$B\neq\varnothing$, we first see that, if $|A|\leqslant n-2$, the coefficient of $\prod_{i\in A}x_i$ is $c_A$ and hence $c_A=0$ whenever
$0<|A|\leqslant n-2$. Next, considering the coefficient of $\prod_{i\in A}x_i$ for $A=[n]\setminus\{k\}$, $k\in [n]$, we obtain
$$
c_{[n]\setminus\{k\}}+c_{[n]\setminus\{\pi^{-1}(k)\}}=0.
$$
Since $\pi$ is made up of odd-length cycles only, it follows that $c_A=0$ whenever $|A|=n-1$.

For $n=4$ we consider the function $\pi\colon [4]\to [4]$ defined by $\pi(1)=\pi(4)=2$, $\pi(2)=3$, and $\pi(3)=4$, and choose the functions
$\phi_A$ as above. We then easily check that these functions are linearly independent.

\end{document}